\title{The isometry group of the bounded Urysohn space is simple
}
\author{Katrin Tent and Martin Ziegler}
\date{November 20, 2012}
\newtheorem{theorem}{Theorem}[section]
\newtheorem{lemma}[theorem]{Lemma}
\newtheorem{proposition}[theorem]{Proposition}
\newtheorem{corollary}[theorem]{Corollary}
\newtheorem{definition}[theorem]{Definition}
\newcommand{\nc}{\newcommand}
\nc{\inv}{^{-1}}
\nc{\Q}{\mathbb{Q}}
\nc{\R}{\mathbb{R}}
\nc{\C}{\mathcal{C}}
\nc{\G}{\mathcal{G}}
\nc{\M}{\mathcal{M}}
\nc{\U}{\mathbb{U}}
\nc{\Frl}{Fra\"iss\'e limit\xspace}
\nc{\Frls}{Fra\"iss\'e limits\xspace}
\nc{\fg}{finitely generated\xspace}
\renewcommand{\phi}{\varphi}
\DeclareMathOperator{\Isom}{Isom}
\DeclareMathOperator{\tp}{tp}
\newcommand{\Ind}{
 \setbox0=\hbox{$x$}\kern\wd0\hbox to 0pt{\hss$
 \mid$\hss}\lower.9\ht0\hbox to 0pt{\hss$\smile$\hss}\kern\wd0
}
\newcommand{\indep}[3]{#1\mathop{\mathpalette\Ind{}}_{#2}#3
}
\begin{document}
\maketitle
\begin{abstract}We  show that the isometry group of the bounded Urysohn space is a simple group.
\end{abstract}

\section{Introduction}

The bounded Urysohn space $\U_1$ of diameter $1$ is the (unique)
complete homogeneous separable metric space of diameter $1$ which
embeds every finite metric space of diameter $1$.  It was shown in
\cite{TZ2} that the isometry group of the (general) Urysohn space
modulo the subgroup of bounded isometries is a simple group and it was
widely conjectured (in particular by M. Rubin and J. Melleray) that
the isometry group of the bounded Urysohn space is a simple group.  We
here prove this conjecture using the approach from \cite{TZ2}:

\begin{theorem}\label{t:main}
The isometry group of $\U_1$ is abstractly simple.
\end{theorem}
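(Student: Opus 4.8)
The plan is to follow the strategy of \cite{TZ2}: equip $\U_1$ with its canonical \emph{stationary independence relation} and then push through the attendant simplicity argument for the automorphism group, the new point being to make everything work inside a space of finite diameter.

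First I would fix the independence relation $\Indep$ obtained from maximal metric amalgamation: for finite $A,B,C\subseteq\U_1$ declare $\indep{A}{C}{B}$ to hold when every cross-distance is as large as the triangle inequalities and the diameter bound permit, i.e. $d(a,b)=\min\{1,\min_{c\in C}(d(a,c)+d(c,b))\}$. One checks the usual axioms — invariance, symmetry, monotonicity, transitivity, existence and stationarity (the latter holding outright since algebraic closure is trivial in $\U_1$) — so that $\Indep$ is a stationary independence relation. The point of capping at the diameter is that the role played in the unbounded space by ``pushing a configuration to infinity'' is now played by ``pushing it to distance $1$'': two points are independent over $\emptyset$ precisely when they lie at the maximal distance $1$.

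Next I would reduce Theorem~\ref{t:main} to a statement about products of conjugates: it suffices to show that for every $g\neq 1$ in $G=\Isom(\U_1)$ the normal closure of $g$ is all of $G$, and I would aim for the quantitative form that every $h\in G$ is a product of a bounded number of conjugates of $g^{\pm1}$. Since $g\neq 1$ there is a point $a_0$ with $\varepsilon:=d(a_0,ga_0)>0$, and using homogeneity together with the extension and stationarity axioms I would upgrade this to a tuple that $g$ ``moves generically'', i.e. produce (a conjugate exhibiting) a configuration $A$ with $\indep{gA}{}{A}$; these generic movers are the basic building blocks. The heart of the argument, and the step I expect to be hardest, is the two-part combinatorial core carried out within diameter $1$. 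First I would show that any two generic movers are interchangeable up to a bounded number of conjugacy factors: stationarity lets me amalgamate independent copies of their ``moving data'', and a commutator construction of the shape $[f g f\inv, g']$ cancels the unwanted part of the support while preserving the intended displacement, so each is a bounded product of conjugates of the other. Second I would show that every $h\in G$ is itself a bounded product of generic movers, decomposing the action of $h$ via $\Indep$ into finitely many steps each sending a configuration to an independent copy and realizing each step by a generic mover; gluing the resulting partial isometries into genuine elements of $G$ is exactly where stationarity and the homogeneity of $\U_1$ are used.

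The main obstacle is the boundedness itself. In \cite{TZ2} the displacement of an isometry can be made arbitrarily large, the bounded isometries form a genuine proper normal subgroup, and simplicity is proved only for the quotient; in $\U_1$ every isometry is bounded, so that quotient is trivial and one must establish simplicity outright. This forces all of the ``moving apart'' to take place within diameter $1$, where the capping at $1$ in the definition of $\Indep$ interacts delicately with the triangle inequality, and the real work is to verify that the amalgamation and commutator constructions still succeed in this regime and that the number of conjugate factors can be bounded uniformly, independently of $h$ and of the chosen $g\neq1$.
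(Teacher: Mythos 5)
Your setup is the right one --- the independence relation you describe (distances over $C$ capped at the diameter) is exactly the one the paper uses, and the overall strategy (reduce simplicity to writing every element as a bounded product of conjugates of a given $g\neq 1$, via elements that move configurations independently) is the paper's. But there are two genuine problems.

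First, your stated quantitative goal is false. You aim to show that every $h$ is a product of a number of conjugates of $g^{\pm 1}$ ``bounded uniformly, independently of $h$ and of the chosen $g\neq 1$.'' This cannot work: a product of $N$ conjugates of an isometry with displacement $\sup_a d(a,g(a))=\varepsilon$ has displacement at most $N\varepsilon$, and $\U_1$ contains isometries of arbitrarily small displacement, so no uniform $N$ can reach an isometry moving some point by distance $1$. The paper explicitly notes that bounded simplicity fails here; the correct bound must depend on $g$ (it is $n\cdot 2^9$ when $d(a,g(a))\geq 1/n$ for some $a$, using first a product of $n$ conjugates to move one point by the full distance $1$).

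Second, and more importantly, the heart of the argument is missing. Your notion of a ``generic mover'' (one configuration $A$ with $\indep{gA}{}{A}$) is far weaker than what is actually needed, namely that $g$ moves \emph{every} type over \emph{every} finite set almost maximally --- only then does the machinery of \cite{TZ2} give the final conjugation bound. Getting from ``$g$ moves one point by distance $1$'' to ``some bounded product of conjugates moves all types almost maximally'' is the real content, and it is not achieved by amalgamating independent copies or by a single commutator cancellation. The paper's mechanism is a quantitative bootstrap: (i) a lemma showing that an element moving a point by $1$ moves every type of distance $1$ by at least $1/2$; (ii) a commutator lemma producing $h$ with $[g,h]$ doubling the guaranteed displacement $C$ for types of a fixed distance $d$; and (iii) a transfer lemma showing that if all types of distance $d_0$ are moved almost maximally then types of distance $d\leq d_0$ are moved almost maximally or by $1-2(d_0-d)$. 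These are interleaved five times, stepping the threshold $d_0$ down through $1,\tfrac34,\tfrac12,\tfrac14$ until all types are covered. Without some substitute for this finite descent --- which is precisely where the diameter bound bites --- your ``the real work is to verify that the constructions still succeed'' leaves the essential step unproved.
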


Note that we cannot expect bounded simplicity as in the results in \cite{TZ2}
as there are isometries of $\U_1$ with arbitrarily small displacement.

The proof relies on the properties of an abstract independence relation.
We will continue to use the concepts introduced in \cite{TZ2}, in particular
the following notion of independence:

\begin{definition}
We say that $A$ and $C$ are independent over $B$, written \[\indep{A}{B}{C},\]
if for all $a\in A,c\in C$ with $d(a,c)<1$ there is some $b\in B$
such that $d(a,c)=d(a,b)+d(b,c)$.

We say that an automorphism $g\in \Isom(\U_1)$ moves \emph{almost maximally}
if for all types $\tp(a/X)$ with $X$ finite there is a realisation $b$
with \[\indep{b}{X}{g(b)}.\]

\end{definition}
Note that this definition of independence makes sense even if
$B=\emptyset$ and hence this defines a stationary independence
relation in the sense of \cite{TZ2}.  The proof here follows the same
lines as the proof in \cite{TZ2} and we will continue using notions
from that paper. In the next section we will establish the following:

\begin{proposition}
  Let $g\in \Isom(\U_1)$. If $d(a,g(a))=1$ for some $a\in\U_1$, then a
  product of $2^5$ conjugates of $g$ moves almost maximally and hence
  any element of $\Isom(\U_1)$ can be written as the product of $2^9$
  conjugates of $g$ and $g\inv$.
\end{proposition}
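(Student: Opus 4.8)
The plan is to prove the two assertions separately and then chain them. First (\emph{amplification}), I would show that a product of $2^5$ conjugates of $g$ moves almost maximally; then (\emph{generation}) that any element of $\Isom(\U_1)$ is a product of $2^4$ conjugates of an almost maximal mover $h$ and its inverse. Since such an $h$ is a product of $2^5$ conjugates of $g$, and then $h\inv$ is a product of $2^5$ conjugates of $g\inv$, any $f$ becomes a product of $2^4\cdot 2^5=2^9$ conjugates of $g^{\pm1}$. Throughout I would rely only on the abstract properties of the independence relation isolated in \cite{TZ2} --- invariance, symmetry, monotonicity, transitivity, and above all \emph{existence} (every type has a realisation independent from a given finite set over its domain) and \emph{stationarity} (independent realisations of the same type remain equivalent over independent extensions) --- together with the homogeneity of $\U_1$. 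I expect the amplification to be the real obstacle, so I treat it last.

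\emph{Generation.} Assume $h$ moves almost maximally. I would first prove a decomposition lemma: every $f$ factors as a product of two almost maximal movers. Given $f$, I build an isometry $s$ by a back-and-forth enumeration of $\U_1$, and when fixing the image of a new point I invoke the existence axiom twice, once to keep the $s$-image independent from the current finite parameter set and once to keep the $(s\inv f)$-image independent as well; then both $s$ and $s\inv f$ come out almost maximal and $f=s\cdot(s\inv f)$. The second lemma is that any almost maximal mover $s$ is a bounded product of conjugates of $h^{\pm1}$: exploiting that both $s$ and $h$ send suitable generic realisations to independent copies, I would construct the conjugating isometries by a further back-and-forth, gluing the partial maps with stationarity so that a short product of conjugates of $h$ agrees with $s$ on a generic set and hence, by homogeneity, everywhere. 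Tracking the constants through these reductions yields the factor $2^4$.

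\emph{Amplification.} The input is only a single maximal displacement $d(a,g(a))=1$, that is $\indep{a}{\emptyset}{g(a)}$, and the output must move \emph{every} type over \emph{every} finite $X$ almost maximally. Since $(a,g(a))$ is a pair at distance $1$, homogeneity gives, for any prescribed pair $(p,q)$ at distance $1$, a conjugate of $g$ sending $p$ to $q$; so conjugation can place a maximal displacement anywhere. The trouble is that a single such conjugate may fix or barely move most realisations of a given type, so I would amplify by an iterated doubling: from an element whose ``active region'' already moves a class of realisations independently, a product of two of its conjugates --- chosen via existence so that their active regions are independent --- moves a richer class. Starting from $\indep{a}{\emptyset}{g(a)}$ and doubling five times produces, as a product of $2^5$ conjugates of $g$, an element whose action is generic enough that any type over any finite $X$ has a realisation $b$ with $\indep{b}{X}{g(b)}$.

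The main obstacle is exactly this last point. In the bounded space one cannot demand maximal displacement everywhere, since there are isometries of arbitrarily small displacement; hence ``almost maximal'' must be read as independence over $X$ rather than as distance $1$, and the doubling has to be engineered to produce independence of the images over the parameters while preserving the prescribed type of the realisation. Obtaining the genericity of the active region for \emph{all} finite $X$ from a single \emph{fixed} finite product, and verifying that five doublings suffice, is where the existence and stationarity axioms must be applied most carefully; this is the technical heart that separates the bounded case from the maximal-movement argument of \cite{TZ2}.
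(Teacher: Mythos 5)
Your overall architecture (amplify $g$ to an almost maximal mover, then quote the generation result to write any element as $2^4$ conjugates of that mover, giving $2^4\cdot 2^5=2^9$) coincides with the paper's, and the generation half is exactly Corollary 5.4 of \cite{TZ2}, which the paper simply cites. The genuine gap is in the amplification step, which is the actual content of the proposition and which you explicitly leave open: you never say what quantity your ``doubling'' doubles, nor why five doublings suffice, and your proposed starting point $\indep{a}{\emptyset}{g(a)}$ is a statement about a single point over the empty set, whereas the induction needs a uniform statement over \emph{every} finite parameter set. An ``active region'' heuristic with independently placed conjugates does not by itself produce any lower bound on displacement for realisations of an arbitrary type $\tp(x/X)$.

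The paper's mechanism is quantitative and runs on the invariant ``$g$ moves every type of distance $d$ (meaning $d(x,X)=d$) almost maximally or by distance $C$''. Three ingredients drive it: (i) Lemma~\ref{l:1-sphere} converts the hypothesis $d(a,g(a))=1$ into the uniform statement that $g$ moves every type of distance $1$ by at least $\frac12$; (ii) Lemma~\ref{l:2kd} produces, by a back-and-forth construction of $h$, a commutator $[g,h]$ --- a product of \emph{two} conjugates of $g^{\pm 1}$ --- which moves every type of distance $d$ almost maximally or by distance $2C$; the doubling comes from the identity $d(a,[g,h](a))=d(a,g(a))+d(b,g(b))$ in the generic case, not from independence of supports; (iii) Lemma~\ref{l:alltypes} propagates almost maximal movement at distance $d_0$ down to a displacement bound $1-2(d_0-d)$ at distances $d\le d_0$, which restarts the doubling on smaller $d$. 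The bookkeeping $\frac12 \to 2d-1 \to 4d-2 \to 2d-\frac12 \to 4d-1 \to 2d \to 4d \to 2d+\frac12 \to 4d+1\ge 1$ is what shows that exactly five commutators, hence $2^5$ conjugates, force almost maximal movement of \emph{all} types. Without isolating this invariant and the two lemmas (ii) and (iii), the claim that ``five doublings produce an element generic enough'' is an assertion of the proposition rather than a proof of it.
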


Using the following observation, this proposition will then imply Theorem~\ref{t:main} exactly as in \cite{TZ2}.

\begin{lemma}\label{l:move1}
If $g\in\Isom(\U_1)$ is not the identity, then a product of conjugates of
$g$ moves some element by distance $1$.
\end{lemma}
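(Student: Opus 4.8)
The plan is to begin from a single point that $g$ displaces and then \emph{amplify} that displacement step by step, up to the diameter $1$, by composing suitable conjugates of $g$. Since $g\neq\mathrm{id}$ there is a point $a$ with $\epsilon:=d(a,g(a))>0$; if $\epsilon=1$ we are already done, so I assume $\epsilon<1$.

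The heart of the argument is a doubling step: \emph{if some product $G$ of conjugates of $g$ moves a point $x$ by distance $\lambda\in(0,1)$, then for every target $t\le\min(2\lambda,1)$ some product of conjugates of $g$ moves a point by exactly $t$.} To prove this I set $y=G(x)$, so $d(x,y)=\lambda$. By the one-point extension property of $\U_1$ (which follows from its universality and homogeneity) there is a point $z$ with $d(y,z)=\lambda$ and $d(x,z)=t$; these prescribed distances form a metric of diameter $\le 1$ precisely because $t\le\min(2\lambda,1)$. Since $d(x,y)=\lambda=d(y,z)$, the partial map $x\mapsto y,\ y\mapsto z$ is a partial isometry, so by homogeneity of $\U_1$ it extends to some $h\in\Isom(\U_1)$. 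Then $G':=hGh\inv$ is again a product of conjugates of $g$, and $G'(y)=hG(h\inv y)=hG(x)=h(y)=z$, whence $G'G$ sends $x$ to $z$ and so moves $x$ by distance exactly $t$.

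Next I iterate this step. Starting from $\lambda_0=\epsilon$ I obtain points moved by $\lambda_{i+1}=\min(2\lambda_i,1)$, each realised by a product of conjugates of $g$. After finitely many steps I reach an index $k$ with $\lambda_{k-1}<1\le 2\lambda_{k-1}$; at that stage the admissible targets fill the whole interval $[0,1]$, so I may choose $t=1$ and obtain a product of conjugates of $g$ that moves some point by distance exactly $1$.

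The two structural ingredients I rely on are the universality (one-point extension) and the homogeneity of $\U_1$, which guarantee respectively the existence of the auxiliary point $z$ and of the conjugating isometry $h$. I expect the main subtlety to be hitting the distance $1$ \emph{exactly} rather than merely in the limit: this works because at each stage the set of achievable displacements is the \emph{closed} interval $[0,\min(2\lambda_i,1)]$, so as soon as doubling would overshoot $1$ we can land on $1$ precisely. The remaining points are routine: the triangle-inequality checks that the distances prescribed for $z$ are consistent and of diameter $\le 1$, and the observation that conjugation and composition keep us within products of conjugates of $g$.
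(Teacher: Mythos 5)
Your proof is correct and rests on the same two ingredients as the paper's (the one-point extension property to build the target point and homogeneity to produce the conjugating isometry), so it is essentially the same amplification-by-conjugation argument. The only difference is bookkeeping: the paper walks a chain $a_0=a,\dots,a_m=b$ with $d(a,b)=1$ and equal steps $d(a_{i-1},a_i)=d(a,g(a))$, composing $m$ conjugates of $g$ itself, whereas you double the displacement by conjugating the accumulated product at each stage.
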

\begin{proof}
Let $a\in\U_1$ be such that $d(a,g(a))=k>0$. Pick $b\in U_1$ with $d(a,b)=1$
and a sequence of elements $a_0=a,\ldots a_m=b$ with $d(a_{i-1},a_i)=k, i=1,\ldots m$.
By homogeneity of $\U_1$ there are elements $h_i\in\Isom(\U_1), i=1,\ldots m$ with $h_i(a)=a_{i-1}, h_i(g(a))=a_i$. Then $g^{h_i}(a_{i-1})=a_i$
and hence the product of these conjugates moves $a$ to $b$.
\end{proof}

\section{Proof of the main result}

For any finite set $X\subset \U_1,a\in\U_1$ we write $d(a,X)=\min
\{d(a,x)\colon x\in X\}$ for the distance from $a$ to $A$. We call
$d(a,X)$ also the distance of the type $\tp(a/A)$.  We put $G=\Isom(\U_1)$.

\begin{lemma}\label{l:1-sphere}
Let $g\in G$ be such that for some $a\in\U_1$ we have $d(a,g(a))=1$.
Then for any finite set $A$ there is some $x$ with $d(x,A)=1$ and
$d(x,g(x))\geq~1/2$.
\end{lemma}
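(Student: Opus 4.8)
The plan is to produce $x$ by a single application of the one-point extension property of $\U_1$: I will only have to prescribe the distances from a fresh point $x$ to the finite set $A\cup\{a,g\inv(a)\}$ so that the prescription is consistent (triangle inequalities, values in $[0,1]$), keeps $x$ at distance $1$ from every point of $A$, and forces $d(x,g(x))\ge 1/2$. The leverage for the displacement is that $g$ is an isometry: once $d(x,g\inv(a))$ is fixed we also know $d(g(x),a)=d(x,g\inv(a))$, so the triangle inequality through $a$ yields
\[
 d(x,g(x))\ \ge\ \bigl|\,d(x,a)-d(g(x),a)\,\bigr|\ =\ \bigl|\,d(x,a)-d(x,g\inv(a))\,\bigr|.
\]
It therefore suffices to make $d(x,a)$ and $d(x,g\inv(a))$ differ by at least $1/2$ while keeping $d(x,A)=1$; note that the requirement $d(x,A)=1$ is exactly the statement $\indep{x}{\emptyset}{A}$ in the sense of the independence relation above.

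First I would try $d(x,a)=1$, $d(x,g\inv(a))=1/2$, and $d(x,y)=1$ for all $y\in A$. Because $g$ is an isometry and $d(a,g(a))=1$, we have $d(a,g\inv(a))=1$, so the pair $\{a,g\inv(a)\}$ is a full diameter apart; this is exactly what makes the assignment $(1,\tfrac12)$ compatible with the edge $\{a,g\inv(a)\}$, and the displayed bound then gives $d(x,g(x))\ge\lvert 1-\tfrac12\rvert=1/2$ at once. The only consistency condition that can fail is with $A$: prescribing $d(x,g\inv(a))=1/2$ alongside $d(x,y)=1$ forces $d(g\inv(a),y)\ge 1/2$ for every $y\in A$, i.e.\ $d(g\inv(a),A)\ge 1/2$. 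The construction thus succeeds whenever $g\inv(a)$ lies at distance $\ge 1/2$ from $A$, and the obvious symmetric variant — using any orbit point $g^{k}(a)$ (each of which is again moved by the full distance $1$, so that the companion $g^{k+1}(a)$ can serve as the distance-$1$ anchor) as the ``half-distance'' anchor — shows that we are done the moment \emph{some} point of the orbit of $a$ sits at distance at least $1/2$ from $A$.

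The hard part, and the only real obstacle, is the opposite ``crowded'' case in which the entire orbit of $a$ is squeezed into the open $1/2$-neighbourhood of $A$; this genuinely occurs, e.g.\ for an involution $g$ with $g(A)=A$ where the orbit $\{a,g(a)\}$ has both points within $1/2$ of $A$. There every candidate $x$ with $d(x,A)=1$ is pushed to distance $>1/2$ from each available anchor, and a short computation shows that finitely many triangle inequalities anchored at the maximally displaced pair can only guarantee a displacement strictly below $1/2$ — so this is not merely a weakness of the bookkeeping. Here I expect to have to drop the finite-pivot estimate and instead exploit the full homogeneity of $\U_1$ through the stationary independence relation: realise $x$ so that $\indep{x}{\emptyset}{A}$ holds while the type of $x$ over a suitable $g$-closed finite set still transports the displacement witnessed at $a$ out to the $1$-sphere around $A$. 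Carrying out this amalgamation, in the spirit of the independence argument of \cite{TZ2}, is the step I anticipate will require the most care.
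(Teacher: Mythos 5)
There is a genuine gap: your argument only covers the case where some point of the orbit of $a$ lies at distance $\geq 1/2$ from $A$, and for the remaining ``crowded'' case you offer only the expectation that a more elaborate amalgamation will work, without carrying it out. Since that case can certainly occur (as you note), the lemma is not proved. Moreover, your diagnosis that the obstacle forces one to abandon finite triangle-inequality pivots is misleading: the paper resolves the crowded case with exactly such a pivot, just not one taken from the orbit of $a$. The missing idea is to \emph{manufacture} a fresh anchor pair. Choose $b$ with $d(b,a)=1/2$ and $\indep{b}{a}{Y}$ where $Y=A\cup g\inv(A)$ (take $a\in A$ without loss). Independence over $a$ forces $d(b,y)=\min\{1,\,1/2+d(a,y)\}$ for $y\in Y$, so $d(b,g\inv(A))\geq 1/2$, i.e.\ $d(g(b),A)\geq 1/2$; and since $d(a,g\inv(a))=1$ one gets $d(b,g\inv(a))=1$, i.e.\ $d(g(b),a)=1$, whence $d(b,g(b))\geq 1/2$. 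The point is that $g(b)$ is far from $A\cup\{b\}$ \emph{regardless} of how the orbit of $a$ sits relative to $A$, because its preimage $b$ was placed far from $g\inv(A)$.

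With this pair in hand the conclusion is a one-line pivot at $g(b)$ rather than at $a$: since every point of $A\cup\{b\}$ is at distance $\geq 1/2$ from $g(b)$, the one-point extension property lets you realise $x$ with $d(x,Ab)=1$ and $d(x,g(b))\leq 1/2$ (the constraints $d(x,g(b))\geq 1-d(y,g(b))$ for $y\in Ab$ all allow the value $1/2$). Then $d(g(x),g(b))=d(x,b)=1$ gives $d(x,g(x))\geq d(g(x),g(b))-d(x,g(b))\geq 1/2$. Your first-case computation is correct and is essentially this argument with $(a,g\inv(a))$ in place of $(b,g(b))$; what you are missing is that the hypothesis $d(a,g(a))=1$ together with one application of independence produces a substitute anchor that always satisfies the needed distance-$\geq 1/2$ condition from $A$, so no separate treatment of the crowded case is required.
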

\begin{proof}
Clearly we may assume that $a\in A$. Put $Y=A \cup g\inv(A)$ and choose
some $b$ with $d(b,a)=1/2$ and independent from $Y$ over $a$.
Then $d(g(b),A)\geq 1/2$ and  since $d(a,g(a))=1$ we also have $d(g(b),a)=1$.
Therefore we have $d(b,g(b))\geq 1/2$.
Choose $x$ with $d(x,Ab)=1$ such that $d(x,g(b))$ is minimal. Since $d(g(b),Ab)\geq 1/2$,
we have $ d(x,g(b))\leq  1/2$ and hence $d(x,g(x))\geq 1/2$.
\end{proof}

Let $p=\tp(a/X)$ be a type over a finite set $X$.  We say that $g\in
G$ moves the type $p$ \emph{almost maximally} if there is a
realisation $x$ of $p$ with $\indep{x}{X}{g(x)}$ and it moves the type
$p$ by distance $C$ if there is a realisation $x$ of $p$ with
$d(x,g(x))\geq C$.

\begin{lemma}\label{l:alltypes}
  Let $g\in G$ and $1\geq d_0\geq 0$ be such that $g$ moves any type
  of distance $d_0$ almost maximally.  Then any type of distance $d\leq
  d_0$ is moved almost maximally or by distance $1-2(d_0-d)$.
\end{lemma}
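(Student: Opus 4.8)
The plan is to reduce a type $p=\tp(a/X)$ of distance $d\le d_0$ to a type of distance exactly $d_0$, apply the hypothesis there, and transport the conclusion back down, losing at most $2(d_0-d)$ in displacement. Write $\delta=d_0-d$. First I would realise $p$ by some $a$ and push it away from $X$: choose $a'$ with $d(a,a')=\delta$ lying geodesically beyond $a$, say $d(a',x)=\min(1,d(a,x)+\delta)$ for $x\in X$, so that $a$ lies between $X$ and $a'$ and $d(a',X)=d_0$. Such an $a'$ exists since the prescribed distances obey the triangle inequality and stay in $[0,1]$. Then $q:=\tp(a'/X)$ has distance $d_0$, so by hypothesis $g$ moves $q$ almost maximally: there is $x'\models q$ with $\indep{x'}{X}{g(x')}$. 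Fixing an automorphism fixing $X$ pointwise and sending $a'$ to $x'$, and letting $x$ be the image of $a$, I get $x\models p$ with $d(x,x')=\delta$ and $x$ inheriting the betweenness relations, in particular $x$ lies between $X$ and $x'$.

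The argument then splits on the displacement $d(x',g(x'))$. If $d(x',g(x'))=1$, two applications of the triangle inequality, using $d(g(x),g(x'))=d(x,x')=\delta$, give
\[ d(x,g(x))\ge d(x',g(x'))-d(x,x')-d(g(x),g(x'))=1-2\delta, \]
so $p$ is moved by distance $1-2(d_0-d)$ and we are done (the conclusion is vacuous when this is negative). So assume $d(x',g(x'))<1$. Then $\indep{x'}{X}{g(x')}$ supplies a witness $b\in X$ with $d(x',g(x'))=d(x',b)+d(b,g(x'))$; since $d(x',b)\ge d(x',X)=d_0$ and the sum is $<1$, the witness $b$ is necessarily ``uncapped'', so that $x$ lies strictly between $b$ and $x'$. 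Here I would aim to show that $b$ also witnesses $\indep{x}{X}{g(x)}$, giving that $p$ is moved almost maximally.

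For this, the only nontrivial inequality is $d(x,g(x))\ge d(x,b)+d(b,g(x))$, the reverse being the triangle inequality. The difficulty is that $g(x)$ sits a distance $\delta$ from $g(x')$ in a direction not controlled by the data on $X$: writing $c=g\inv(b)$, one has $d(b,g(x))=d(c,x)$, a quantity invisible to $\tp(x/X)$. I would therefore choose $x$ not as the naive pullback but so as to also control its position relative to $c$: keeping $d(x,x')=\delta$ and $d(x,g(x'))=d(x,b)+d(b,g(x'))$ (so $b$ lies between $x$ and $g(x')$), I would additionally place $x$ on the geodesic from $x'$ towards $c$, i.e. $d(x,c)=d(x',c)-\delta$. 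Since $d(x',c)=d(b,g(x'))$, the estimate $d(x,g(x))\ge d(x,g(x'))-\delta=d(x,b)+\big(d(b,g(x'))-\delta\big)=d(x,b)+d(x,c)=d(x,b)+d(b,g(x))$ closes the gap and produces the witness (when $d(x,g(x))\ge 1$ independence is automatic).

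The step I expect to be the main obstacle is exactly this simultaneous placement: verifying that the requirements on $x$ --- realising $p$ over $X$, sitting at distance $\delta$ below $x'$ with $b$ between $x$ and $g(x')$, and lying on the geodesic from $x'$ to $c=g\inv(b)$ --- are jointly consistent, so that such an $x$ genuinely exists in $\U_1$, together with the degenerate range $d(x',c)<\delta$. This is a finite metric consistency check, but it is where the betweenness relations coming from the ``uncapped'' witness and the stationarity of the independence relation must be combined carefully; everything else is triangle-inequality bookkeeping.
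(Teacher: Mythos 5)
Your skeleton matches the paper's: prolong $p$ by $\delta=d_0-d$ to a type of distance $d_0$, apply the hypothesis to get a realisation $x'$ moved almost maximally, pull back to a realisation $x$ of $p$ at distance $\delta$ from $x'$, and split on whether $d(x',g(x'))=1$. The first case is correct. But the second case hinges on exactly the step you flag as the main obstacle, and that step genuinely fails: the ``simultaneous placement'' of $x$ need not exist. You require $d(x,w)=d(x',w)-\delta$ for all $w\in X$ (so that $x\models p$ and sits below $x'$) together with $d(x,c)=d(x',c)-\delta$ for $c=g\inv(b)$. For the pair $(c,w)$ the triangle inequality demands $d(x,c)+d(x,w)\ge d(c,w)$, i.e.\ $d(x',c)+d(x',w)\ge d(c,w)+2\delta$; but the hypothesis only hands you \emph{some} realisation $x'$ of the prolonged type over $X$, with no control over its position relative to $g\inv(X)$, and nothing prevents $x'$ from lying on a geodesic between $c$ and a point of $X$, where $d(x',c)+d(x',w)=d(c,w)$ and your configuration is metrically inconsistent. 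There is also no slack to exploit: for $b$ to witness $\indep{x}{X}{g(x)}$ you need $d(x,g(x))\ge d(x,b)+d(x,c)$, and since the only available lower bound is $d(x,g(x))\ge d(x,g(x'))-\delta$ while the triangle inequality forces $d(x,c)\ge d(x',c)-\delta$, the value $d(x,c)=d(x',c)-\delta$ is the unique candidate. Finally, the degenerate range $d(x',c)<\delta$ is not a boundary case to be checked but is fatal to this route.

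The repair is the paper's: control the position relative to $g\inv(X)$ \emph{before} invoking the hypothesis, not after. Take $x'\models p$ independent from $g\inv(X)$ over $X$, put $p'=\tp(x'/Xg\inv(X))$, and form the prolongation $q=p'+\delta$ over the enlarged base $Xg\inv(X)$, which still has distance $d_0$. A realisation $z$ of $q$ moved almost maximally then automatically satisfies $\indep{z}{y}{Xg\inv(X)}$ for the pulled-back point $y\models p'$ with $d(y,z)=\delta$, so $y$ lies between $z$ and $b$ \emph{and} between $z$ and $g\inv(b)$ for whatever witness $b\in X$ appears; applying $g$ to the latter puts $g(y)$ between $g(z)$ and $b$, and concatenating gives $d(z,g(z))=d(z,y)+d(y,b)+d(b,g(y))+d(g(y),g(z))$, which forces $d(y,g(y))=d(y,b)+d(b,g(y))$. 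With that single change --- prolonging over $Xg\inv(X)$ instead of over $X$ --- your argument goes through.
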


\begin{proof}
  Let $p=\tp(x/X)$ be a type of distance $d\leq d_0$ and $x'$ a
  realisation of $p$ independent from $g\inv(X)$ over $X$ (so
  $d(x',Xg\inv(X))=d$). Put $p'=\tp(x'/Xg\inv(X))$ and let $q=p'+(d_0-d)$
  denote the prolongation of $p'$ by $d_0-d$.

  By assumption on $g$, there is a realisation $z$ of $q$ which is
  moved almost maximally over $Xg\inv(X)$.  Hence
  \[\indep{z}{Xg\inv(X)}{g(z)}\]
  and by transitivity
  \[\indep{z}{X}{g(z)}.\]

  If $d(z,g(z))=1$ then for a realisation $y$ of $p'$ with
  $d(y,z)=d_0-d$ we clearly have $d(y,g(y))\geq 1-2(d_0-d)$.\\

  Otherwise we find some $b\in X$ such
  that \[d(z,g(z))=d(z,b)+d(b,g(z)).\] Let $y$ be a realisation of
  $p'$ with $d(y,z)=d_0-d$.  Note that by definition of the
  prolongation we have \[\indep{z}{y}{Xg\inv(X)}\mbox{ \ \ and hence
    \ \ }\indep{g(z)}{g(y)}{X}.\]
  Therefore \[d(z,g(z))=d(z,y)+d(y,b)+d(b,g(y))+d(g(y),g(z))\] and in
  particular
\[\indep{y}{X}{g(y)}.\]

\end{proof}

\begin{lemma}\label{l:2kd}
  Let $g\in G$. Then there exists some $h\in G$ such that $[g,h]$ has the
  following property for all $d$ and $C$: if $g$ moves all types of
  distance $d$ almost maximally or by distance $C$, then $[g,h]$ moves
  all types of distance $d$ almost maximally or by distance $2C$.
\end{lemma}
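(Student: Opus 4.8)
The plan is to build on the elementary identity
\[d\big(x,[g,h](x)\big)=d\big(g(x),g^h(x)\big),\]
where $[g,h]=g\inv h\inv gh$ and $g^h=h\inv gh$; it follows by applying the isometry $g$ to the pair $x,\,g\inv(g^h(x))$. Thus the displacement of $[g,h]$ at a point $x$ is exactly the distance between the two images $g(x)$ and $g^h(x)=h\inv\big(g(h(x))\big)$, and the whole problem becomes: arrange a realisation $x$ of the given type $p=\tp(x/X)$ so that $g(x)$ and $g^h(x)$ sit on opposite sides of $x$, each at distance at least $C$ from it. Since $h$ is an isometry we have $d(x,g^h(x))=d(h(x),g(h(x)))$, so the two one-sided bounds $d(x,g(x))\ge C$ and $d(x,g^h(x))\ge C$ come from the hypothesis on $g$, applied to the type $\tp(x/X)$ and to the type of $h(x)$ over a suitable finite set, respectively. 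If in addition
\[\indep{g(x)}{x}{g^h(x)},\]
then by the betweenness built into the independence relation $x$ lies between the two images and the distances add, giving $d\big(g(x),g^h(x)\big)\ge 2C$, that is, $[g,h]$ moves $p$ by distance $2C$.

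For the choice of $h$, I would fix once and for all a suitably generic isometry $h$ (whose existence follows from the homogeneity of $\U_1$), chosen \emph{before} $d$ and $C$, as the statement requires. The role of such an $h$ is to push $x$ to a point $h(x)$ that is independent from the finite amount of $g$-data around $x$ --- namely from $X\cup g(X)\cup g\inv(X)$ --- over $x$, so that the displacement of $g$ at $h(x)$, transported back by $h\inv$, points in a direction independent from the displacement of $g$ at $x$. One then applies the hypothesis to the type $\tp(h(x)/\,X\cup g(X)\cup g\inv(X))$, which is again of distance $d$ by this choice of realisation. Whenever the almost-maximal alternative of the hypothesis occurs at either stage, it should be propagated to the almost-maximal alternative of the conclusion by the very same independence calculus, so the two alternatives match up throughout.

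The main obstacle is precisely to secure $\indep{g(x)}{x}{g^h(x)}$ while $g$ is completely arbitrary: there is no direct control over where $g(h(x))$ lands, and hence none over $g^h(x)=h\inv(g(h(x)))$ relative to $g(x)$. My plan to overcome this is to use that the independence relation is invariant under the action of $G$ --- so that the local picture of the pair $\big(x,g^h(x)\big)$ is the $h\inv$-image of that of $\big(h(x),g(h(x))\big)$ --- together with transitivity and stationarity of the relation, exactly as in the proof of Lemma~\ref{l:alltypes}. Concretely I would build the realisation by a chain of independent extensions, first $x$ over $X$ and then $h(x)$ independent from $X\cup g(X)\cup g\inv(X)$ over $x$, and then transfer the independence of the $h(x)$-side data across the isometry $h$ to obtain independence of the two images over $x$. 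Keeping every auxiliary type at distance $d$ throughout this prolongation bookkeeping, so that the hypothesis on $g$ remains applicable at each stage, is the delicate part I expect to occupy most of the argument.
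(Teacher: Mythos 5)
Your proposal follows the paper's proof in all essentials: the same commutator identity $d\big(x,[g,h](x)\big)=d\big(g(x),g^h(x)\big)$, the same construction of $h$ as a union of a chain of finite partial isometries extended by independent realisations, the same key independence $\indep{g^h(x)}{x}{g(x)}$ to make the two displacements add to $2C$, and the same case analysis to propagate the almost-maximal alternative. The only point to add is the edge case $d\big(g(x),g^h(x)\big)=1$, where betweenness is not required by the definition of independence, but then the pair is independent over every set and the type is moved almost maximally --- exactly as the paper observes.
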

\begin{proof}
  As in \cite{TZ2} we may work in a countable model of the bounded
  Urysohn space.  We build $h$ by a `back-and-forth' construction as
  the union of a chain of finite partial automorphisms. It is enough
  to show the following: let $h'$ be already defined on the finite set
  $U$, let $p$ be a type over $X$ of distance $d$ and assume that $g$
  moves all such types almost maximally or by distance $C$. Then $h'$
  has an extension $h$ such that $[g,h]$ moves $p$ almost maximally or
  by distance $2C$.

  We may assume that $X$ is contained in $U$. We denote by $V$ the
  image of $U$ under $h'$. Consider any realisation $a$ of $p$
  independent from \[Y=Ug\inv(U)g\inv(X)\] and a realisation $b$ of
  $h'(\tp(a/U))$ over $V$. Then we extend $h'$ to $h:Uac\cong Vbg(b)$
  where $c$ is a realisation of $h\inv(\tp(g(b)/Vb))$ independent from
  $Xg(a)$. Then $a$ is moved under $[g,h]$ to $g\inv(c)$. Since
  \[\indep{c}{Ua}{g(a)}\mbox{ \ and \ \ } \indep{g(a)}{g(X)}{UX}\] we have
  $\indep{c}{g(X)a}{g(a)}$, which means
  that \[\indep{c}{a}{g(a)}\mbox{ \ or \ }\indep{c}{g(X)}{g(a)}.\] The
  second case implies $\indep{g^{-1}(c)}{X}{a}$, which implies our
  claim.\\

  Since $d(a/Y)=d(a/U)=d$, our assumption about $d$ and $C$ implies
  that one of the following three cases occur:\\

  \noindent {\bf Case 1.} We find $a$ and $b$ as above with $d(a,g(a))\geq
  C$ and $d(b,g(b))\geq C$. By the above we may assume that
  $\indep{c}{a}{g(a)}$. If $d(c,g(a))=d(g^{-1}(c),a)=1$, then $g^{-1}(c)$
  and $a$ are independent over the empty set and hence over $X$. Otherwise we have
  \[d(g^{-1}(c),a)=d(c,g(a))=d(c,a)+d(a,g(a))=d(b,g(b))+d(a,g(a))\geq 2C.\]

  \noindent  {\bf Case 2.} $\indep{a}{Y}{g(a)}$: This implies
  $\indep{a}{X}{g(a)}$. Since $\indep{g(a)}{g(X)}{X}$ transitivity yields
  $\indep{a}{g(X)}{g(a)}$.  So from $\indep{c}{ag(X)}{g(a)}$, then we
  get $\indep{c}{g(X)}{g(a)}$ and hence $\indep{g\inv(c)}{X}{a}$ as
  desired.\\

  \noindent  {\bf Case 3.} $\indep{b}{V}{g(b)}$: This implies
  $\indep{a}{U}{c}$.  As above we now
  get \[\indep{c}{g(X)}{g(a)}\mbox{ \ \ and hence
    \ \ }\indep{g\inv(c)}{X}{a}.\]

\end{proof}

By the results in \cite{TZ2} we now obtain:
\begin{proposition}\footnote{We thank Adriane Ka\"ichouh and Isabel
    M\"uller for pointing
    out an error in an earlier version of the proposition.}\label{C:simple}
  Let $g\in\ G$ be such that for some $a\in\U_1$ we have
  $d(a,g(a))=1$.  Then every element of $G$ is the product of $2^9$
  conjugates of $g$ and $g\inv$.
\end{proposition}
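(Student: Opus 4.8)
The plan is to follow the abstract framework from \cite{TZ2}, bootstrapping from a single isometry that moves a point by distance $1$ up to an isometry that moves every type almost maximally, and then invoking the factorisation machinery to write an arbitrary element as a bounded product of conjugates. First I would use Lemma~\ref{l:1-sphere} to pass from the hypothesis $d(a,g(a))=1$ to a uniform lower bound: for every finite $A$ there is a type of distance $1$ moved by distance at least $1/2$. This provides the base case $C=1/2$ for an iteration.

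The heart of the argument is an amplification loop combining Lemma~\ref{l:2kd} with Lemma~\ref{l:alltypes}. Lemma~\ref{l:2kd} says that passing to a commutator $[g,h]$ doubles the displacement constant $C$ while preserving the property "moves all types of distance $d$ almost maximally or by distance $C$"; since each commutator is a product of $2$ conjugates of $g^{\pm 1}$ (via $[g,h]=g\inv h\inv g h = g\inv (g^{h})$ using $g^h = h\inv g h$), iterating this doubling a fixed number of times turns the displacement bound $1/2$ into a bound exceeding $1$, forcing almost maximal movement at that fixed distance $d_0$. I would then run Lemma~\ref{l:alltypes} to propagate almost maximal movement from types of distance $d_0$ down to all smaller distances $d$: the term $1-2(d_0-d)$ shows that as $d$ approaches $d_0$ the movement is either almost maximal outright or by a distance close to $1$, and a further bounded number of doubling steps upgrades "moved by distance $1-2(d_0-d)$" to almost maximal for the full range of distances. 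Bookkeeping the number of conjugates through these nested iterations is what yields the explicit bound $2^5$ in the stated Proposition, and the final factor of $2^4$ comes from the last step below.

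Once an isometry $g_0$ that moves almost maximally is produced as a product of $2^5$ conjugates of $g$, the conclusion follows from \cite{TZ2}: in a stationary independence relation an isometry moving almost maximally generates the whole group in boundedly many steps, and the cited results give that any element of $G$ is a product of $2^4$ conjugates of such a $g_0$ and its inverse. Composing the two bounds, $2^5 \cdot 2^4 = 2^9$, yields the claim that every element of $G$ is the product of $2^9$ conjugates of $g$ and $g\inv$. The reference to \cite{TZ2} at the head of the proposition signals that this last packaging step is quoted rather than reproved.

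The main obstacle will be verifying that the independence relation here genuinely satisfies the axioms (stationarity, transitivity, and the existence of independent realisations with prescribed distances) that \cite{TZ2} requires, and in particular that the prolongation operation used in Lemma~\ref{l:alltypes} and the independent extensions built in Lemma~\ref{l:2kd} remain well defined within the \emph{bounded} space $\U_1$, where distances are capped at $1$. The delicate point is that almost maximal movement is a strictly weaker condition than maximal movement (genuine displacement by $1$ is unavailable for small types), so one must everywhere track the distinction between $\indep{x}{X}{g(x)}$ and the numerical bound $d(x,g(x))\geq C$, exactly as the two alternatives are threaded through the three cases of Lemma~\ref{l:2kd}. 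Getting the constant right — ensuring the doublings terminate after enough steps to guarantee almost maximal movement across \emph{all} distances simultaneously — is the accounting that determines the exponent $9$.
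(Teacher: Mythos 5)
Your overall strategy is the paper's: Lemma~\ref{l:1-sphere} gives the base displacement $1/2$ on types of distance $1$, Lemma~\ref{l:2kd} doubles the displacement constant at the cost of one commutator (two conjugates, via $[g,h]=g\inv\cdot g^h$ as you say), Lemma~\ref{l:alltypes} propagates almost maximal movement downwards in distance, and Corollary~5.4 of \cite{TZ2} converts an almost maximally moving isometry into a bounded generator with the factor $2^4$, so that $2^5\cdot 2^4=2^9$.

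There is, however, a genuine gap exactly where the exponent is decided: you never carry out the computation showing that \emph{five} applications of Lemma~\ref{l:2kd} suffice, and the iteration as you literally describe it would not close. After one doubling and one application of Lemma~\ref{l:alltypes} with $d_0=1$ you have the displacement bound $1-2(d_0-d)=2d-1$ for $d\le d_0$; for $d< d_0-1/2$ this is nonpositive, so ``a further bounded number of doubling steps'' applied to it never reaches $1$, no matter how many commutators you take. The mechanism that actually works is to alternate doubling with Lemma~\ref{l:alltypes} along a strictly decreasing sequence of thresholds: from $2d-1$ (all types of distance $1$ moved almost maximally), doubling gives $4d-2\ge 1$ for $d\ge 3/4$, so Lemma~\ref{l:alltypes} with $d_0=3/4$ gives $2d-\tfrac12$; doubling gives $4d-1\ge 1$ for $d\ge\tfrac12$, whence $2d$; then $4d\ge 1$ for $d\ge\tfrac14$, whence $2d+\tfrac12$; and the fifth doubling gives $4d+1\ge 1$ for every $d\ge 0$. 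The point is that each round the constant term of the linear bound $2d+c$ increases by exactly $\tfrac12$, so four rounds after the initial one exhaust all distances; this is the accounting that produces $2^5$, and without it the stated bound $2^9$ is not established. Your base case, the two-conjugates-per-commutator count, and the final $2^4$ from \cite{TZ2} are all correct.
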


\begin{proof}
  An iterated application of Lemma~\ref{l:2kd} to $g$ yields
  isometries $g_1$, $g_2$, $g_3$,$g_4$ and $g_5$. Note that $g_5$ is a
  product of $2^5$ conjugates of $g$ and $g^{-1}$.\\

  \noindent By Lemma~\ref{l:1-sphere} $g$ moves every type with
  distance $1$ by distance $\frac{1}{2}$. So $g_1$ moves every type of
  distance $1$ almost maximally or by distance $2\cdot 1/2=1$, hence
  almost maximally.  Now Lemma~\ref{l:alltypes} (with $d_0=1$) implies
  that $g_1$ moves every type of distance $d$ almost maximally or by
  distance $1-2(1-d)=2d-1$.\\

  \noindent This implies that $g_2$ moves every type of distance $d$
  almost maximally or by distance $4d-2$. So types of distance $d\geq
  \frac{3}{4}$ are moved almost maximally and using
  Lemma~\ref{l:alltypes} with $d_0=\frac{3}{4}$ we see that types of
  distance $d\leq\frac{3}{4}$ are moved almost maximally or by
  distance $1-2(\frac{3}{4}-d)=2d-\frac{1}{2}$.\\

  \noindent Now $g_3$ moves every type of distance $d$ almost
  maximally or by distance $4d-1$. So types of distance $d\geq
  \frac{1}{2}$ are moved almost maximally and using
  Lemma~\ref{l:alltypes} with $d_0=\frac{1}{4}$ we see that types of
  distance $d\leq\frac{1}{2}$ are moved almost maximally or by
  distance $1-2(\frac{1}{2}-d)=2d$.\\

  \noindent This implies that $g_4$ moves every type of distance $d$
  almost maximally or by distance $4d$. So types of distance $d\geq
  \frac{1}{4}$ are moved almost maximally and using
  Lemma~\ref{l:alltypes} with $d_0=\frac{1}{4}$ we see that types of
  distance $d\leq\frac{1}{4}$ are moved almost maximally or by
  distance $1-2(\frac{1}{4}-d)=2d+\frac{1}{2}$.\\

  \noindent So $g_5$ moves all types almost maximally.
  By Corollary 5.4 in \cite{TZ2}, every element of $G$ is a product of
  at most $2^4$ conjugates of $g_5$ or its inverse.
\end{proof}

\begin{corollary}
  Let $g\in G$. If there is $a\in\U_1$ with $d(a,g(a))\geq 1/n$, then
  any element of $G$ can be written as a product of at most $n\cdot
  2^9$ conjugates of $g$ and $g\inv$.
\end{corollary}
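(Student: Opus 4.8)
The plan is to reduce everything to Proposition~\ref{C:simple}, which already settles the case $d(a,g(a))=1$. Given $g$ and a point $a$ with $k:=d(a,g(a))\ge 1/n$, I will first manufacture, as a \emph{short} product of conjugates of $g$, an isometry $h$ that moves some point by distance exactly $1$, and then apply Proposition~\ref{C:simple} to $h$, converting conjugates of $h$ back into conjugates of $g$.

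For the first step I follow the construction in the proof of Lemma~\ref{l:move1}, but keeping track of the number of factors. Fix $b$ with $d(a,b)=1$ and build a $k$-chain $a=a_0,a_1,\dots,a_m=b$ with $d(a_{i-1},a_i)=k$ for all $i$. Such a chain exists inside $\U_1$ whenever $mk\ge 1$: the required finite configuration has diameter at most $1$ and is a legitimate metric space, so it embeds by the extension property of $\U_1$ (the first $m-1$ steps run `straight', reaching distance $(m-1)k<1$ from $a_0$, and the triangle inequality $1-(m-1)k\le k$, equivalent to $mk\ge 1$, leaves room for the final step of size $k$ to land at distance exactly $1$ from $a_0$). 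Choosing $m=\lceil 1/k\rceil$ gives $mk\ge 1$ and, since $k\ge 1/n$, also $m\le n$. Exactly as in Lemma~\ref{l:move1}, homogeneity yields $h_1,\dots,h_m\in G$ with $g^{h_i}(a_{i-1})=a_i$, so that $h:=g^{h_m}\cdots g^{h_1}$ is a product of $m\le n$ conjugates of $g$ satisfying $d(a,h(a))=d(a,b)=1$.

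Now I apply Proposition~\ref{C:simple} to $h$: every element of $G$ is a product of $2^9$ conjugates of $h$ and $h\inv$. Writing $h=c_1\cdots c_m$ with each $c_i$ a conjugate of $g$, a conjugate $fhf\inv=(fc_1f\inv)\cdots(fc_mf\inv)$ is again a product of $m$ conjugates of $g$, and $h\inv=c_m\inv\cdots c_1\inv$ together with its conjugates is a product of $m$ conjugates of $g\inv$. Substituting these expressions, every element of $G$ becomes a product of at most $2^9\cdot m\le n\cdot 2^9$ conjugates of $g$ and $g\inv$, which is the assertion.

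The only delicate point is the feasibility and length of the $k$-chain: one must verify that $m=\lceil 1/k\rceil$ steps of size $k$ can actually be realised between two points at distance $1$ in a space of diameter $1$, which comes down precisely to the inequality $1-(m-1)k\le k$ ensuring that the last vertex can sit at distance $1$ from the first. Everything after that is the routine bookkeeping that a conjugate of a product is a product of conjugates, so no new difficulty arises beyond Proposition~\ref{C:simple} itself.
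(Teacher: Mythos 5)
Your proof is correct and follows exactly the route the paper intends: the corollary is the quantitative version of Lemma~\ref{l:move1} (a chain of $m=\lceil 1/k\rceil\le n$ steps of size $k$ produces a product of at most $n$ conjugates of $g$ moving a point by distance $1$) combined with Proposition~\ref{C:simple}, and your bookkeeping that conjugates of $h$ and $h\inv$ decompose into $m$ conjugates of $g$ and $g\inv$ is the standard final step. The verification that the $k$-chain embeds in $\U_1$ (via $mk\ge 1$ and the one-point extension property) is the only point needing care, and you handle it correctly.
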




\vspace{2cm}

\noindent\parbox[t]{15em}{
Katrin Tent,\\
Mathematisches Institut,\\
Universit\"at M\"unster,\\
Einsteinstrasse 62,\\
D-48149 M\"unster,\\
Germany,\\
{\tt tent@uni-muenster.de}}
\hfill\parbox[t]{18em}{
Martin Ziegler,\\
Mathematisches Institut,\\
Albert-Ludwigs-Universit\"at Freiburg,\\
Eckerstr. 1,\\
D-79104 Freiburg,\\
Germany,\\
{\tt ziegler@uni-freiburg.de}}

\end{document}